\numberwithin{equation}{section} \numberwithin{figure}{section}
\numberwithin{table}{section} \setlength{\oddsidemargin}{0in}
\theoremstyle{plain}
\theoremstyle{definition}
\newtheorem{thm}{Theorem}
\newtheorem{rem}{Remark}
\numberwithin{equation}{section} \numberwithin{lem}{section}
\numberwithin{thm}{section} \numberwithin{cor}{section}
\numberwithin{pro}{section} \numberwithin{rem}{section}
\begin{document}

\title{The heteroclinic connection problem for general double-well potentials}%about the article that should go on the front page should be
%placed here. General acknowledgments should be placed at the end of the article.}

%\subtitle{Do you have a subtitle?\\ If so, write it here}

%\titlerunning{Short form of title}        % if too long for running head
\author{Christos Sourdis}
\address{Department of Mathematics,  University of
Turin,   Via Carlo Alberto 10,
20123, Turin, Italy.}
\email{christos.sourdis@unito.it}

\keywords{heteroclinic connection, variational methods, Hamiltonian systems, phase transitions}

\footnote{\textup{2000} \textit{Mathematics Subject Classification}:  65M60, 65M50, 35Q55}
%\author{Christos Sourdis} \address{Department of Mathematics and Statistics, University of
%Cyprus.}
              %Tel.: +123-45-678910\\
              %Fax: +123-45-678910\\
 %             \email{csourdis@tem.uoc.gr}           %  \\

%\authorrunning{Short form of author list} % if too long for running head

%\institute{F. Author \at
%              first address \\
%              Tel.: +123-45-678910\\
%              Fax: +123-45-678910\\
%              \email{fauthor@example.com}           %  \\
%%             \emph{Present address:} of F. Author  %  if needed
%           \and
%           S. Author \at
%              second address
%}

%\date{Received: date / Accepted: date}
% The correct dates will be entered by the editor

\maketitle

\begin{abstract}
By variational methods, we provide a simple proof of  existence of a
heteroclinic orbit to the Hamiltonian system $u''=\nabla W(u)$ that
connects the two global minima of a double-well potential $W$.
%In
%contrast to previous approaches, we do not impose any   condition
%near the global minima.
Moreover, we consider several inhomogeneous
extensions.
% Include keywords, PACS and mathematical
%subject classification numbers as needed.
%\keywords{variational methods \and heteroclinic connection \and
%Hamiltonian system}
% \PACS{PACS code1 \and PACS code2 \and more}
% \subclass{MSC code1 \and MSC code2 \and more}
\end{abstract}

\section{Introduction}
\subsection{The problem}
In this paper, we will prove existence of solutions $u\in
C^2(\mathbb{R},\mathbb{R}^n)$ to the following problem:
\begin{equation}\label{eqEq}
u_{xx}=\nabla W(u),\ \ x\in \mathbb{R},\ \ \lim_{x\to \pm
\infty}u(x)=a_\pm,
\end{equation}
where \begin{equation}\label{eqass1}W\in C^1(\mathbb{R}^n),\ n\geq
1,\ \textrm{satisfies}\ W(a_-)=W(a_+)=0,\  W(u)>0\ \textrm{if}\ u\neq
a_\pm,\end{equation} for some $a_-\neq a_+$, and
the function
\begin{equation}\label{eqomega}
\omega(s)=\min_{|u|=s}W(u),\ \ s\geq 0,
\end{equation}
satisfies
\begin{equation}\label{eqLeoni}
\int_{0}^\infty \sqrt{\omega(s)}ds=\infty.
\end{equation}
In passing, we note that $\omega$ is upper semicontinuous. In fact, we will prove a more general result (see Theorem \ref{thmThm} below).
%\begin{equation}\label{eqinf}\liminf_{|u|\to \infty}
%W(u)>0.\end{equation}

Since  $a_-\neq a_+$, such a solution is called a \emph{heteroclinic
connection}, as opposed to a homoclinic. Motivated from mechanics,
in relation with Newton's second law of motion (where $x$ plays the
role of time), we will often refer to $W$ as a double-well potential
(see also \cite{alikakosIndiana,bonhoure} and the references therein). In order to avoid confusion, we point out that $-W$ is what is usually referred to as the potential in classical mechanics.

We note that the quantity
\[
\frac{1}{2}|u_x|^2-W(u)
\]
is constant along solutions of the equation, which easily  implies
that $W(a_-)=W(a_+)$ is a necessary condition for a heteroclinic
connection to exist between $a_-$ and $a_+$.

We will also study the inhomogeneous problem
\begin{equation}\label{eqinhom}
u_{xx}=h(x)\nabla W(u),\ \ \lim_{x\to \pm \infty} u(x)=a_\pm,
\end{equation}
under various assumptions on $h$.

\subsection{Motivation}\label{subsecMotiv}
The theory of phase transitions  has led to the extensive study of
singularly perturbed, non-convex energies of the form
\[
J_\varepsilon(u)=\int_{\Omega}^{}\left\{\frac{\varepsilon}{2}|\nabla
u|^2+\frac{1}{\varepsilon}W(u) \right\}dx,
\]
where $W$ is a nonnegative potential with a finite number of global minima; usually these are assumed to be nondegenerate, and that $W$ is coercive at infinity or at least that  \begin{equation}\label{eqinf}\liminf_{|u|\to \infty}
W(u)>0.\end{equation}
 In
the scalar case, this problem was studied by Modica \cite{modica}
using De Giorgi's notion of $\Gamma$-convergence (see also
\cite{alberti,braides} and the references therein). In the vectorial case of
two global minima, that is when (\ref{eqass1}) and (\ref{eqinf}) hold,
the $\Gamma$-limit of this energy was studied in
\cite{barroscoFosec}, \cite{fonsecaTartarEdin} (for a thorough discussion around condition (\ref{eqinf}) in this context, we refer to \cite{leonESaiM}). The case where $W$
has more than two wells was considered in \cite{baldo} (see also
\cite{sternberg}). In this context, the heteroclinic connections
determine the interfacial energy.

In parallel,  the interest in the heteroclinic connection problem
stems also from the study of the vectorial Allen-Cahn equation that
models multi-phase transitions (see \cite{alamaGui}, \cite{alberti},
\cite{alesio}, \cite{alikakosIndiana}, \cite{bethuel}, \cite{bgs},
and the references therein). Loosely speaking, the heteroclinic
connections are expected to describe the way in which the solutions
to the multi-dimensional parabolic system
\[
u_t=\varepsilon^2 \Delta u-\nabla W(u),
\]
for small $\varepsilon>0$, transition from one state to the other
(see \cite{BR}).

 The heteroclinic connection problem also comes up when
studying phase coexistence in consolidating porous medium (see
\cite{italians} and the references therein),  crystalline grain
boundaries (see \cite{braun}), planar transition front solutions to
the Cahn-Hilliard system \cite{howard},  and  domain walls in
coupled Gross-Pitaevskii equations (see \cite{alamaPeli,goldman} and the
references therein).

We emphasize that  some of these applications  require a triple-well or four-well
potential. Nevertheless, under a reflection symmetry assumption on
$W$ (which is frequently inherited from the physical model), the
problem can easily  be reduced to the double-well case (see
\cite{alamaPeli} or \cite{stefanos}). The heteroclinic connection problem for nonsymmetric  multi-well potentials is much more complicated to treat (see \cite{zunuga}).

For an application which requires one to  consider potentials with
degenerate minima, we refer to \cite{ball}.

Our motivation for the  inhomogeneous problems is twofold:

In \cite{nakashima}, among other things, by employing singular
perturbation techniques, the author constructed heteroclinic
connections to the scalar spatially inhomogeneous Allen-Cahn
equation
\begin{equation}\label{eqNaka}
u_{xx}=h(\varepsilon x) W'(u)\ \ \textrm{such that} \ \ \lim_{x\to
\pm \infty}u(x)=a_\pm,
\end{equation}
provided that $\varepsilon>0$ is sufficiently small, where $W$ has
the same features as in the present paper but assuming
non-degeneracy of the global minima; $h$ is strictly positive,
bounded, and having at least one non-degenerate local minimum. The
result relies on the fact that the $\varepsilon=0$ limit problem has
a unique, asymptotically stable heteroclinic solution. Our results
provide existence for all $\varepsilon>0$ and hold for systems with
more general $W$. Moreover, we believe that, with some more effort,
they can provide information about the $\varepsilon\to 0$ asymptotic
behavior of the solutions.

Recently, there has been an interest in constructing heteroclinic
solutions to  semilinear elliptic systems with variational structure (see
\cite{alikakosPreprint}). In that case, in order to exclude the
possibility of constructing the one dimensional heteroclinic, one
has to impose some spatial inhomogeneity to the problem.
%We expect that the
%techniques of the present paper can be extended to these problems.
We believe that  our approach, a
 refinement of that of \cite{alikakosIndiana,alikakosPreprint}, has the
advantage of being flexible enough to potentially treat the case of
these semilinear elliptic systems.

\subsection{Known results}
The problem (\ref{eqEq}) is completely understood if $n=1$, see for
instance \cite{alberti}, \cite{bonhoure}; in fact, assumption
(\ref{eqinf}) is not needed in that case.

If $n\geq 2$, under assumptions (\ref{eqass1}) and (\ref{eqinf}), the
existence of a heteroclinic orbit was proven  in
\cite{rabinowitz} via a variational approach (see also \cite[Thm.
2.3]{bonhoure}).

Under various additional nondegeneracy or geometric conditions near
the global minima of $W$, this problem has been dealt, mostly as a
tangential issue, in several references. Under the assumption that
\begin{equation}\label{eqAF}W(a_\pm+\rho \nu)\ \  \textrm{is\ increasing\ in}\ \rho \in [0,\delta],\ \forall \  \nu \in \mathbb{S}^{n-1},\end{equation} (for
some small $\delta>0$), where $\mathbb{S}^{n-1}$ stands for the unit
sphere, the existence of a heteroclinic connection was proven
recently  in \cite{alikakosIndiana} (see also \cite{alikKATZ} and \cite{stefanos}). Their
novelty was to employ constraints which are subsequently removed. It is worthwhile noting that they observed that their
proof goes through even when (\ref{eqinf}) is replaced with the  assumption that the function $\omega$ in (\ref{eqomega}) is decreasing as $s\to \infty$ and
\[
s^2\omega (s)\to \infty \ \ \textrm{as}\ \ s\to \infty,
\]
(compare with (\ref{eqLeoni}), which was also found independently in \cite{monteil}).
 If
$W(a_\pm+\rho \nu)\geq c\rho^\gamma$, $\rho \in [0,\delta]$, for
some $c,\gamma,\delta>0$, and assuming that the level sets of $W$
near $a_\pm$ are strictly convex, the existence of a heteroclinic
connection was proven very recently  in
\cite{katzourakis} in the spirit of the concentrated compactness
method. If the global minima of $W$ are non-degenerate, that is the
Hessian $\partial^2W(a_\pm)$ is positive definite, the existence of
a heteroclinic connection was proven  in
\cite{sternberg} by using techniques from $\Gamma$-convergence
theory  (an additional growth condition as $|u|\to \infty$ was also
assumed).
 Other variational proofs, which usually require some non-degeneracy of
the global minima, can be found in \cite{alamaGui}, \cite{alamaPeli},
\cite{alesio}, \cite{bgs}, \cite{goldman} and \cite{schatzman}. In fact, as is
pointed out, the proof of \cite{alamaPeli} carries over to the case
where  $W$ vanishes to finite order at $a_\pm$.

 To the best of our knowledge, there are only a few    corresponding results
 for spatially inhomogeneous systems, which we will refer to in the subsequent remarks. For the state of the art in the case of the scalar problem, we refer the interested reader to \cite{alves} and the references therein.

Lastly, we note that homoclinic and periodic orbits for  conservative systems as in (\ref{eqEq}) can also be studied variationally (see for instance \cite{cardioli} and \cite{weinstein} respectively); of course there are corresponding spatially inhomogeneous extensions.

\subsection{The main result} In view of the above discussion, and motivated by related literature
(see Remarks \ref{rem1}, \ref{rem2}, \ref{rem3}, \ref{rem4}  below), it is natural to embed problem
(\ref{eqEq}) into the more general family of inhomogeneous problems (\ref{eqinhom}) with $h$ positive and periodic.

Our primary goal is to prove the following
theorem.

\begin{thm}\label{thmThm}
\emph{Let
	$h\in C(\mathbb{R};\mathbb{R})$ be $T$-periodic ($T>0$) and satisfy
	\begin{equation}
	\label{eqh0G}
	h(x)\geq h_0>0,\ \ x\in \mathbb{R},
	\end{equation}
	for some constant $h_0$.
	Then, under assumptions (\ref{eqass1}) and (\ref{eqLeoni}), there exists a
solution $u\in C^2(\mathbb{R},\mathbb{R}^n)$ to the problem
(\ref{eqinhom}).
}\end{thm}

Subsequently, we adapt this proof to treat in a unified way a broader class of
spatially inhomogeneous problems of the form (\ref{eqinhom}).

\subsection{Method of proof and outline of the paper}
Our proof is motivated from the constraint variational set up of
\cite{alikakosIndiana} but, instead of using energy decreasing local
replacement arguments as a substitute of the maximum principle, we
will use energy controlling local replacements together with a
clearing-out argument. In particular, we do not need to employ the
polar representation that was used in \cite{alikakosIndiana} (see
also the introduction in \cite{bethuel}), that is to write a
function $u\in W^{1,2}(\mathbb{R},\mathbb{R}^n)$ as
\[
u(x)=a_\pm+\rho_\pm(x)\Theta_\pm(x)\ \ \textrm{whenever}\
\rho_\pm(x)=\left|u(x)-a_\pm\right|\neq 0;\ \ u(x)=a_\pm\ \
\textrm{otherwise},
\]
 which turns out to be a rather cumbersome issue (see, however, the observation  below (\ref{eqpolar}) herein),  especially
in the case of the corresponding elliptic problems  (see
\cite{alikakosPreprint}).

In our opinion, besides of rendering the most general
result, our proof is the simplest available.

The outline of the paper is the following: In Section \ref{secprof}
we present the proof of Theorem \ref{thmThm}, and in Section
\ref{secinhomo} we consider  some other extensions to  the inhomogeneous case.
\section{Proof of the main result}\label{secprof}
\begin{proof}[Proof of Theorem \ref{thmThm}]
The  main part of the proof will be devoted in  showing that  there
exists a solution $u \in C^2(\mathbb{R}, \mathbb{R}^n)$ to the
equation
\begin{equation}\label{eqeuler}
u_{xx}=h(x)\nabla W(u),
\end{equation}
and an $L>0$, such that
\begin{equation}\label{eqstrict}
|u(x)-a_-|<\delta,\ \ x\leq -L;\ \ |u(x)-a_+|<\delta,\ \ x\geq L,
\end{equation}
for some small \begin{equation}
\label{eqdeltaD}\delta <\frac{|a_+ -a_-|}{2}.
\end{equation} To this end, as in
\cite{alikakosIndiana}, for $L>2$, let
\begin{equation}\label{eqX1}
X_{L}^-=\left\{u\in W^{1,2}_{loc}(\mathbb{R},\mathbb{R}^n)\ :\
|u(x)-a_-|\leq \delta,\ \ x\leq -L
 \right\},
\end{equation}
\begin{equation}\label{eqX2}
X_{L}^+=\left\{u\in W^{1,2}_{loc}(\mathbb{R},\mathbb{R}^n)\ :\
|u(x)-a_+|\leq \delta,\ \ x\geq +L
 \right\}.
\end{equation}
It is standard to show that there exists a $u_{L}\in X_{L}^- \cap
X_{L}^+$ such that
\begin{equation}\label{eqfinit}
J(u_{L})=\inf_{u\in X_{L}^-\cap X_{L}^+}J(u)<\infty,
\end{equation}
where $J:W^{1,2}_{loc}(\mathbb{R},\mathbb{R}^n) \to [0,\infty]$ is
the associated energy functional
\begin{equation}\label{eqJinho}
J(u)=\int_{\mathbb{R}}^{}\left\{\frac{1}{2}|u_x|^2+h(x)W(u) \right\}dx.
\end{equation}
This was shown in \cite{alikakosIndiana} in the case where $h \equiv 1$, but the general case where $h\geq 0$ can be treated completely analogously (no other property of $h$ besides  reasonable regularity is needed at this point). Our goal is to show that  there exists
$L\gg 1$ such that $u_{L}$ (or some translation of it) satisfies
(\ref{eqstrict}), since this will imply that $u_L$ is a classical
solution to (\ref{eqeuler}). We note that, a-priori, the minimizer
$u_{L}$ is $C^2$ and satisfies the Euler-Lagrange equation
(\ref{eqeuler}) \emph{only in $(-L,L)$ and wherever it is away from
the cylindrical boundary of the constraints}.

By constructing a piecewise linear competitor that is identically
equal to $a_-$ for $x\leq -1$ and equal to $a_+$ for $x\geq 1$, it
is easy to show that
\begin{equation}\label{eqC1}
J(u_{L})\leq C_1,
\end{equation}
where the constant \emph{$C_1>0$ is independent of $L>2$} (an
analogous argument also appears in \cite{cafa-cordoba} and many other papers).

We claim that, given any $d\in (0,\delta]$,  there exists
$\varepsilon \in (0,\frac{d}{2})$, \emph{independent of $L>2$}, such
that
\begin{equation}\label{eqclaim}
\textrm{if}\ \ x_2-x_1\geq 3\ \ \textrm{and} \ \
|u_{L}(x_i)-a_\pm|\leq \varepsilon,\ i=1,2,
\end{equation}
then
\begin{equation}\label{eqclaim2}
|u_{L}(x)-a_\pm|<d,\ \ x\in [x_1,x_2].
\end{equation}
In passing, we note that an analogous property was established independently in \cite[Prop. 8.1]{valdin} for the corresponding problem with nonlocal diffusion, where it is appropriately called 'stickiness property' (see also \cite{smyrnelia} for a more involved application of this property in the local setting).
 It is clear that we only have to verify this claim for the $+$
case. To this end, suppose that $x_1,x_2 \in \mathbb{R}$ and $\varepsilon \in (0,\frac{d}{2})$ are such that the corresponding case of  (\ref{eqclaim}) holds for $u_L$. The main observation is that the minimality
property  of $u_{L}$ implies that there exists a constant $C_2>0$,
\emph{independent of $\varepsilon,x_1,x_2,L$}, such that
\begin{equation}\label{eqenergyBound}
\int_{x_1}^{x_2}\left\{\frac{1}{2}\left|(u_{L})_x\right|^2+h(x)W(u_{L})
\right\}dx\leq C_2\varepsilon.
\end{equation}
In fact, if $W$ was $C^2$ near $a_+$, we would have $\varepsilon^2$ instead of $\varepsilon$ in the above relation.  This follows by comparing the energy of $u_L$ (keep in mind (\ref{eqfinit})) to that of a  $\tilde{u}_L\in X_L^{-}\cap X_L^{+}$ which agrees with $u_{L}$
outside of $(x_1,x_2)$, is identically equal to $a_+$ over $[x_1+1,x_2-1]$,
and is linear in the intermediate interpolation intervals. We point out that the assumption that the distance between $x_1$
and $x_2$ is larger than some universal constant (the number $3$ here is chosen for convenience purposes only) plays a crucial
role in controlling the gradient of $\tilde{u}_L$ in the interpolation zones. One could say
that the competitor $\tilde{u}_L$ is obtained by performing \emph{surgery} on $u_L$; we refer the interested reader to \cite[Rem. 2.3]{ambrosioCabre}
and \cite{chenFH} for related surgery type constructions.
 The desired claim now
follows by applying the clearing-out lemma in \cite{bethuel} (see
Lemma 1 therein). For the sake of completeness, and for future purposes, let us present a
different argument. Suppose to the contrary that there exists $x_*
\in (x_1,x_2)$ such that
\begin{equation}\label{eqpossi}
|u_{L}(x)-a_+|<d,\ \ x\in [x_1,x_*), \ \ \textrm{and}\ \
|u_{L}(x_*)-a_+|=d.
\end{equation}
Note that there exists  a $V\in C[0,\delta]$, $V>0$ on $(0,\delta]$,
such that
\begin{equation}\label{eqlower}
W(a_\pm+\rho \nu)\geq V(\rho)\ \ \forall\ \rho \in [0,\delta],\
\nu\in \mathbb{S}^{n-1}.
\end{equation}
Indeed, plainly set $V(\rho)=\min\{V_-(\rho),V_+(\rho) \}$, where
\begin{equation}\label{eqpolar}
V_\pm(\rho)=\min_{\nu\in {\mathbb{S}}^{n-1}}W(a_\pm+\rho \nu),\ \
\rho\in [0,\delta].
\end{equation}
In passing, we observe that $u_L(x)\neq a_+$, $x\in [x_1,x_*]$ (if not
and $u_L(\bar{x})=a_+$ for some $\bar{x}$,   the function which coincides with $u_L$ for $x<\bar{x}$ and is identically equal to $a_+$ for $x\geq \bar{x}$
would belong in $X_L^{-}\cap X_L^{+}$ while having less energy than the minimizer $u_L$). Armed with this
information, we have
\[\begin{array}{rcl}
    \int_{x_1}^{x_*}\left\{\frac{1}{2}\left|(u_{L})_x\right|^2+h(x)W(u_{L})
\right\}dx & \stackrel{(\ref{eqh0G}),(\ref{eqlower})}{\geq} &
\int_{x_1}^{x_*}\left\{\frac{1}{2}\left|(u_{L}-a_+)_x\right|^2+h_0V\left(|u_{L}-a_+|\right)
\right\}dx \\
      &   &   \\
    \textrm{via the diamagnetic inequality \cite{FH}:} &
    \geq &\int_{x_1}^{x_*}\left\{\frac{1}{2}\left|u_{L}-a_+\right|_x^2+h_0V\left(|u_{L}-a_+|\right)
\right\}dx    \\
      &   &   \\
    \textrm{by Young's inequality:} & \geq &  \sqrt{2h_0}
\int_{x_1}^{x_*}|u_{L}-a_+|_xV^\frac{1}{2}\left(|u_{L}-a_+|\right)
dx
 \\
      &   &   \\
\textrm{by the area formula \cite{leoniBook}:}&=&\sqrt{2h_0}
\int_{\mathbb{R}}^{}V^\frac{1}{2}(\rho)\textrm{card}|u_{L}-a_+|^{-1} \left(\{\rho \}\right)d\rho \\
& &\\
\textrm{from}\ (\ref{eqclaim}), (\ref{eqpossi}):     &  \geq  &\sqrt{2h_0}
      \int_{\frac{d}{2}}^{d}V^\frac{1}{2}(\rho)d\rho,
   \end{array}
\]
where \emph{card} stands for the cardinality and \[|u_{L}-a_+|^{-1} \left(\{\rho \}\right)=\left\{x\in (x_1,x_*)\ :\
\left|u_{L}(x)-a_+\right|=\rho \right\}.\]
It is worthwhile to mention that the use of Young's inequality in related contexts seems to be due to \cite{modica} and is frequently referred to as Modica's trick (recall also the discussion in Subsection \ref{subsecMotiv}).
Therefore, on account of (\ref{eqenergyBound}), we can exclude the
possibility (\ref{eqpossi}) by choosing
\begin{equation}\label{eqepsilon}\varepsilon\in \left(0,\frac{d}{2}\right)\ \ \textrm{such\ that} \ \ \varepsilon<\frac{\sqrt{h_0}}{C_2}
\int_{\frac{d}{2}}^{d}V^\frac{1}{2}(\rho)d\rho,
\end{equation}
which proves the claim.
 It is worth mentioning that, if (\ref{eqAF}) holds, the maximum principle of \cite{alikakosIndiana,alikakosPreprint}, which follows from a sophisticated surgery type argument, asserts the following: if $|u_L(y_i)-a_+|<d$, $i=1,2$, for some $y_1<y_2$ and $d\in (0,\delta]$, then $|u_L(x)-a_+|<d$, $x\in (y_1,y_2)$. In fact, the main difference of our proof with that in \cite{alikakosIndiana} lies in that
 we use the 'asymptotic maximum principle' in (\ref{eqclaim})-(\ref{eqclaim2}) instead of the aforementioned maximum principle
 that was developed and used therein (see also \cite{alikKATZ,alikakosPreprint} for various extensions).

Next, we claim that, for any $\zeta>0$ sufficiently small, there
exists \begin{equation}\label{eqM3}M>3,\end{equation} \emph{independent of $L$}, and a sequence of positive
numbers $x_1^+<x_2^+<\cdots$, with
\begin{equation}\label{eqMineq}
x_1^+\in (0,M),\ \
M<x_{i+1}^+-x_i^+<3M,\ \  i\geq1,\end{equation} such that
\begin{equation}\label{eq1}W\left(u_{L}(x_i^+) \right)\leq \zeta, \
\ i\geq 1.\end{equation} To see this, plainly take
\begin{equation}\label{eqM}
M\geq C_1h_0^{-1}\zeta^{-1},
\end{equation}
where $C_1$ is as in (\ref{eqC1}) (we may assume that $M>3$), and
apply the integral mean value theorem in the intervals $[0,M],
[2M,3M],\cdots$. Analogously, given $\zeta>0$ sufficiently small, we
can find negative numbers $\cdots<x_2^-<x_1^-$, with $x_1^-\in (-M,0)$,
$M<x_i^--x_{i+1}^-<3M$ (increasing the value of $M$ if needed), such
that $W\left(u_{L}(x_i^-) \right)\leq \zeta$, $i\geq 1$.

We also claim that there exists a constant $C_3>0$, independent of $L>2$, such that
\begin{equation}\label{eqC3gp}\left|u_L(x) \right|\leq C_3,\ \  x\in \mathbb{R}.\end{equation} Indeed, in view of the definition (\ref{eqomega}) and the property (\ref{eqC1}),
we can easily adapt the previous argument, leading to (\ref{eqepsilon}), to get that
\begin{equation}\label{eqtriviali}
C_1\geq \sqrt{2h_0} \int_{\min_{x\in \mathbb{R}}|u_L|}^{\max_{x\in \mathbb{R}}|u_L|}\sqrt{\omega(s)}ds,
\end{equation}
(see also \cite{leonESaiM}). Now, the desired estimate follows at once from (\ref{eqLeoni}) and the trivial observation that
$\min_{x\in \mathbb{R}}|u_L|\leq C_4$ for some constant $C_4>0$ that is independent of $L>2$ (just take $C_4=|a_-|+\delta$).

Let $\varepsilon>0$ be as in (\ref{eqepsilon}) with \begin{equation}\label{eqdeltaLi}d=\delta,\end{equation} so that property (\ref{eqclaim})-(\ref{eqclaim2}) is valid. Then, let
$\zeta>0$ be such that the following property holds: \begin{equation}\label{eq2}W(u)\leq \zeta \ \ \textrm{and}\ \ |u|\leq C_3 \ \
\textrm{imply\ that}\ \ |u-a_-|\leq \varepsilon \ \textrm{or}\
|u-a_+|\leq \varepsilon,\end{equation}
which of course is possible thanks to (\ref{eqass1}).
  We then choose
\[L=1000M,\] where $M>3$ is any fixed number satisfying (\ref{eqM}). We note that this choice of $L$ will turn out to be much larger than what is actually needed, that is we make it for convenience purposes only.  From
(\ref{eqclaim}), (\ref{eqclaim2}), (\ref{eqMineq}), (\ref{eq1}), and
(\ref{eq2}), it follows readily that
\begin{equation}\label{eqknow}
|u_{L}(x)-a_-|<\delta\ \ \textrm{if}\ \ x\leq -1010M; \ \
|u_{L}(x)-a_+|<\delta\ \ \textrm{if}\ \ x\geq 1010M.
\end{equation}
Indeed, we first note that (\ref{eqMineq}) certainly implies that there exists $i_0\in \mathbb{N}$
such that
\begin{equation}\label{eqMX}
x_{i_0}^+\in (1000M,1010M).
\end{equation}
Then, in view of  (\ref{eqdeltaD}) and (\ref{eqX2}),
we obtain from (\ref{eq1}) with $i\geq i_0$, (\ref{eqC3gp}) and property (\ref{eq2}) that
\[
|u_L(x_i^+)-a_+|\leq \varepsilon,\ \ i\geq i_0.
\]
So, thanks to (\ref{eqM3}), (\ref{eqMineq}) and (\ref{eq1}), we can use the property (\ref{eqclaim})-(\ref{eqclaim2}) in each interval
$(x_i^+,x_{i+1}^+)$, $i\geq i_0$, to deduce that
\[
|u_L(x)-a_+|<\delta,\ \ x\in [x_{i_0}^+,\infty)
\]
(keep in mind (\ref{eqdeltaLi})). The second relation in (\ref{eqknow}) now follows at once if we recall (\ref{eqMX}).
Similarly we can show the validity of the first relation.

In the remainder of the proof, we will further restrict $M$ to be an integer multiple of the period $T$ of $h$.

In view of (\ref{eq1}) and (\ref{eq2}), only two possibilities can
occur:

\textbf{(1)} $|u_{L}(x_{1}^+)-a_+|\leq \varepsilon$. Then, by the
property (\ref{eqclaim})--(\ref{eqclaim2}), the condition (\ref{eqdeltaLi}), and the second part of
(\ref{eqknow}), we infer that \begin{equation}\label{eqgives} |u_{L}(x)-a_+|<\delta \ \ \textrm{for}\ \ x\geq
x_{1}^+\in (0,M).\end{equation}
In light of the first relation in (\ref{eqknow}) and the above estimate, if $u_{L}$ does not touch the cylindrical constraint somewhere on $[-1010M,-1000M]$  we are done. In any case, its translate
\[
v_L(\cdot)=u_{L}(\cdot-20M)
%\in X_{L}^-\cap X_{L}^+
\]
does satisfy the desired relation (\ref{eqstrict}).
Indeed,   the first relation in (\ref{eqknow}) yields that
\[
|v_L(x)-a_-|<\delta \ \ \textrm{if}\ \ x\leq -990M=-L+10M.
\]
On the other side, relation (\ref{eqgives}) gives that
\[
|v_L(x)-a_+|<\delta,\ \ x\geq 21M=L-979M.
\]
Moreover, since $h$ is $T$-periodic and $M$ is an integer multiple of $T$, we have that $v_L$ has the same energy as $u_L$. We note that the periodicity of $h$ is used only at this point.
In other words, $v_L$ is also a minimizer of problem (\ref{eqfinit}) which further satisfies (\ref{eqstrict}) (i.e. it does not touch the cylindrical  constraints).

%Hence, by abusing notation and replacing $u_{L}$
%with its translate , if
%necessary (they have the same energy), via the first part of
%(\ref{eqknow}), we deduce that (\ref{eqstrict}) holds, as desired.

\textbf{(2)} $|u_{L}(x_{1}^+)-a_-|\leq \varepsilon$. Then,  we
  have that $|u_{L}(x)-a_-|< \delta$ for $x\leq x_{1}^+\in (0,M)$
(from (\ref{eqclaim})-(\ref{eqclaim2}) and the first part of
(\ref{eqknow})). In that case, as before,
 replacing $u_{L}$ by the translated minimizer
 $u_{L}(\cdot+20M)$, if necessary, we find that (\ref{eqstrict})
 holds, as desired.

The above argument of using a suitable translate of $u_L$ is mainly motivated from \cite{alikakosIndiana}.
Actually, under the monotonicity assumption (\ref{eqAF}), it was shown in the latter reference that $u_L$ can touch the cylindrical constraints  at most at one of
the points $x=-L$ or $x=L$, provided that $L$ is sufficiently large. So, in contrast to the general case at hand, they could use any sufficiently small translation in the appropriate direction.

We have thus shown that the minimizer $u_{L}$ satisfies
(\ref{eqstrict}). In particular, by standard arguments (see
\cite{alikakosIndiana}), it induces  a classical solution to
(\ref{eqeuler}). To complete the proof of the theorem, we will show
that \begin{equation}\label{eqLimes}\lim_{x\to \pm \infty}u_L(x)=a_\pm.\end{equation}
 Indeed, for any arbitrarily small   $\tilde{d}>0$,
 let $\tilde{\varepsilon}\in (0,\frac{\tilde{d}}{2})$ be such that the corresponding property to (\ref{eqclaim})-(\ref{eqclaim2}) holds.
As before, using (\ref{eqC1}) and (\ref{eq2}), we can find a sequence $\{ \tilde{x}_i^+\}$
such that $\tilde{x}_{i+1}^+- \tilde{x}_i^+\geq 3$, $\tilde{x}_i^+\geq L$, $i\geq 1$, satisfying
$|u_L(\tilde{x}_i^+)-a_+|\leq \tilde{\varepsilon}$ for $i\geq 1$. Thus, by the aforementioned property (\ref{eqclaim})-(\ref{eqclaim2}), we deduce that
$|u_L(x)-a_+|< \tilde{d}$, $x\geq \tilde{x}_1^+$, which clearly implies the validity of the $+$ case in (\ref{eqLimes}).
Similarly we can show the $-$ case.

The proof of the theorem is complete.
\end{proof}

\begin{rem}\label{rem1}
\emph{In the scalar case ($n=1$), further assuming that $a_\pm$ are
non-degenerate minima of $W$, this problem was considered in
\cite{alessioPeriod}, and for $W$ as above in \cite{bonhoure}.}
\emph{It is easy to see that, when $n=1$, the above theorem as well as Theorem \ref{thminhomconstnt} below do not need the assumption (\ref{eqLeoni}).}
\emph{In fact, by appropriately modifying $W$ in the two intervals outside of its global minima (see for example \cite[Ch. 1]{amster}), we can capture a heteroclinic solution to the resulting system with values strictly between them. Of course this is also a heteroclinic to the original problem.}
\end{rem}

\begin{rem}
\emph{For a recent application of the above theorem, we refer to \cite{sourdisAML}.}
\end{rem}
\begin{rem}
	\emph{The proof of Theorem \ref{thmThm} carries over without difficulty to
		the quasi-linear setting:
		\[
		\left(|u_x|^{p-2}u_x \right)_x=\nabla W(u),\ \ \lim_{x\to \pm
			\infty}u(x)=a_\pm,\ \ (p>2),
		\]
		at least when (\ref{eqinf}) is assumed.
		This problem was considered  in \cite{kar}, and the references therein, under
		assumption
		(\ref{eqAF}). The only essential difference is that one has to
		modify slightly the proof of the clearing-out lemma of \cite{bethuel} by
		using the H\"{o}lder inequality instead of the  Cauchy-Schwarz .
		%(apparently, our argument leading to (\ref{eqepsilon}) fails in that case).
	}\end{rem}

\begin{rem}\label{remAlik}
\emph{Recently in \cite{alikakosPreprint}, building on the arguments
of \cite{alikakosIndiana} which rely on assumption (\ref{eqAF}), the
authors constructed heteroclinic connections for semilinear elliptic
systems of the form $\Delta u=\nabla W(u)$ in singly periodic
domains of $\mathbb{R}^m$ with Neumann boundary conditions. In fact, as is pointed
out, their approach can be extended to construct heteroclinic
connections for the problem
\begin{equation}\label{eqalikh}\left\{\begin{array}{c}
                                         \Delta u=h(x_1,\cdots,x_m)\nabla W(u) \\
                                         \textrm{in\ cylindrical\ domains\ (in\ the}  \ x_1 \
\textrm{direction)}\\
\textrm{with\ Neumann\ boundary\ conditions},
                                       \end{array}\right.
\end{equation} where $h$ is positive and periodic in $x_1$. It would
be interesting to see how much our approach can be pushed towards
this direction.}
\end{rem}

\section{Further inhomogeneous problems}\label{secinhomo}

\subsection{The asymptotically constant inhomogeneity}
\begin{thm}\label{thminhomconstnt}
\emph{Assume that $h\in C(\mathbb{R})$  satisfies (\ref{eqh0G}),
\begin{equation}\label{eqh}\lim_{ x\to \pm \infty}h(x)=h_\infty\in (0,\infty)\ \
\textrm{and}\ \ h(x)\leq h_\infty,\ x\in \mathbb{R}.\end{equation}
Under assumptions (\ref{eqass1}) and (\ref{eqLeoni}) on $W$, there exists a
solution to the problem (\ref{eqinhom}).
}\end{thm}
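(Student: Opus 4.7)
The plan is to adapt the proof of Theorem~\ref{thmThm} to the inhomogeneous energy functional
\[
J(u) = \int_{\mathbb{R}} \left\{\tfrac{1}{2}|u_x|^2 + h(x) W(u)\right\} dx,
\]
with the same constraint sets $X_L^{\pm}$ from \eqref{eqX1}--\eqref{eqX2}. Since $h$ is continuous, positive, bounded, and converges to $h_\infty>0$ at $\pm\infty$, it satisfies uniform bounds $0<h_{\min}\le h(x)\le h_\infty$. Armed with these bounds, the existence of a constrained minimizer $u_L\in X_L^-\cap X_L^+$, the uniform energy bound $J(u_L)\le C_1$ via a piecewise linear competitor, the clearing-out argument \eqref{eqenergyBound}--\eqref{eqepsilon} (the Modica-type lower bound simply picking up a factor of $\sqrt{h_{\min}}$), and the extraction via the mean value theorem of good points $x_i^{\pm}$ with $W(u_L(x_i^{\pm}))\le\zeta$ all go through with only routine modifications.

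The principal obstacle is the final step of Theorem~\ref{thmThm}, which exploits translation invariance to replace $u_L$ by $u_L(\cdot\pm 20M)$ and so enforce the strict inequalities \eqref{eqstrict}. In the inhomogeneous setting the translate satisfies a different Euler--Lagrange equation and has different energy, so it is neither a solution of \eqref{eqinhom} nor a minimizer. Moreover, testing translates $u_L(\cdot\mp T)$ of $u_L$ as competitors gives an inequality that turns out to be automatic from $h\le h_\infty$ and yields no new information.

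To circumvent this I propose a limiting procedure. Let $L\to\infty$; the uniform bound $J(u_L)\le C_1$ together with standard elliptic estimates allow one to extract a subsequence $u_{L_k}$ converging in $C^2_{\mathrm{loc}}(\mathbb{R},\mathbb{R}^n)$ to a classical solution $u^*$ of $u_{xx}=h(x)\nabla W(u)$ with $J(u^*)\le C_1$. Applying the clearing-out lemma to $u^*$ then produces sequences $y_k\to\pm\infty$ along which $u^*(y_k)$ is within $\varepsilon$ of $a_\pm$, and hence $u^*(x)\to a_\pm$ as $x\to\pm\infty$. The hardest point will be ruling out the degenerate scenario $u^*\equiv a_-$ or $u^*\equiv a_+$, which would correspond to the transition region of $u_{L_k}$ drifting off to $\pm\infty$. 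Here $h\le h_\infty$ plays the decisive role: any drifting transition would yield a limiting energy at least $E_\infty:=\inf\left\{\int(\tfrac{1}{2}|\varphi'|^2+h_\infty W(\varphi))\,dx \,:\,\varphi(\pm\infty)=a_\pm\right\}$, which is finite and positive by Theorem~\ref{thmThm} applied to the constant-coefficient problem; while placing an approximate minimizer of this limit problem at a fixed finite location produces, via $h(x)\le h_\infty$, a competitor whose energy is $\le E_\infty$, forcing the transition region of $u_L$ to remain in a bounded interval and ensuring that $u^*$ is a genuine heteroclinic connection.
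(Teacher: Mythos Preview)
Your overall strategy---carry over the preliminary steps with $h_{\min}$ in front of $W$, then compensate for the loss of translation invariance by an energy comparison against the constant-coefficient problem $h\equiv h_\infty$---is essentially the paper's approach as well. The paper likewise compares $m_L=\inf_{X_L^-\cap X_L^+}J$ with the corresponding constant-coefficient infimum $m_{\infty,L}$ and rules out drift of the transition layer by contradiction.

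However, your final step has a genuine gap. You argue that a drifting transition forces the energy to be at least $E_\infty$, while a competitor placed at a fixed finite location gives energy $\le E_\infty$. These two statements are \emph{compatible}; both sides could equal $E_\infty$, and no contradiction follows. Using only $h\le h_\infty$ you cannot upgrade the $\le$ to a strict inequality. The paper closes this gap as follows. First dispose of $h\equiv h_\infty$ (then Theorem~\ref{thmThm} applies directly), and otherwise fix an interval $(x_-,x_+)$ on which $h<h_\infty$. Translate the constant-coefficient minimizer $u_{\infty,L}$ so that its transition meets $(x_-,x_+)$; an elementary gradient estimate based on $J(u_{\infty,L})\le C_1$ shows that $u_{\infty,L}$ stays at distance $\ge\delta/2$ from both $a_\pm$ on a fixed subinterval of $(x_-,x_+)$, independently of $L$. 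Testing this translate in $J$ then yields the \emph{uniform strict} bound $m_L\le m_{\infty,L}-c$ with $c>0$ independent of large $L$. On the other side, if the transition of $u_{L_j}$ drifts to (say) $-\infty$, one passes to a local limit $U$ as you suggest, shows $U\equiv a_+$, hence $W(u_{L_j})\to 0$ in $C_{\mathrm{loc}}$, and then
\[
m_{L_j}=\int_{\mathbb{R}}\left\{\tfrac12\bigl|(u_{L_j})_x\bigr|^2+h_\infty W(u_{L_j})\right\}dx+\int_{\mathbb{R}}\bigl(h(x)-h_\infty\bigr)W(u_{L_j})\,dx\ \ge\ m_{\infty,L_j}+o(1),
\]
where the error term is controlled using $h\to h_\infty$ at infinity together with the uniform bound on $\int W(u_{L_j})$. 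This now contradicts $m_{L_j}\le m_{\infty,L_j}-c$ for large $j$. Your comparison idea is the right one, but it is precisely this quantitative $-c$ gain---coming from the \emph{strict} inequality $h<h_\infty$ somewhere---that is missing from your outline.
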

\begin{proof}
The main difference of the problem at hand with the previous ones is
that there is no translation invariance (continuous or discrete).

As before, for $L>2$, let
\begin{equation}\label{eqJ-}
m_L=\inf_{u\in X_L^-\cap X_L^+} J(u),
\end{equation}
where $X_L^\pm$ are as in (\ref{eqX1})--(\ref{eqX2}), and the energy functional $J$ is as in
(\ref{eqJinho}).
As we mentioned in the proof of Theorem \ref{thmThm},
it is easy
to  show that the infimum is attained at some $u_L\in X_L^-\cap
X_L^+$.

Motivated from \cite{bartsch}, where ground states to the nonlinear
Schr\"{o}dinger equation with potential $h$ were considered, we will
compare $m_L$ with the 'limiting energy'
\[
m_{\infty,L}=\inf_{u\in X_L^-\cap
X_L^+}\int_{{\mathbb{R}}}^{}\left\{\frac{1}{2}|u_x|^2+h_\infty
W(u)\right\}dx.
\]
As we have already shown in Theorem \ref{thmThm}, the above infimum is attained
by a  classical solution $u_{\infty,L}\in X_L^-\cap X_L^+$ of the
problem
\[
u_{xx}=h_\infty\nabla W(u),\ \ \lim_{x\to \pm \infty} u(x)=a_\pm,
\]
provided that $L$ is sufficiently large. Clearly, since $u_{\infty,L}$ is continuous and (\ref{eqdeltaD}) holds, there exists $x_L\in \mathbb{R}$
such that
\begin{equation}\label{eqexplan}
\left|u_{\infty,L}(x_L)-a_-\right|\geq \delta \ \ \textrm{and}\ \
\left|u_{\infty,L}(x_L)-a_+\right|\geq \delta.
\end{equation}

Observe that all the properties in the proof of Theorem \ref{thmThm}
up to (\ref{eq2}) remain true for this $u_L$ as well as for $u_{\infty,L}$ (recall also a related comment in Case (1) therein), with the same constants in fact as those in the aforementioned theorem. In light of this, let us keep the same
notation.

 We may assume that
$h(x)<h_\infty$ somewhere, say that
\begin{equation}\label{eq-}h(x)<h_\infty,\ \ x\in (x_-,x_+),\end{equation} for some
$x_-,x_+\in \mathbb{R}$. Thanks to (\ref{eqexplan}), by translating $u_{\infty,L}$ if
necessary, we may assume that
\begin{equation}\label{eqexplan22}
\left|u_{\infty,L}(x_-)-a_-\right|\geq \delta \ \ \textrm{and}\ \
\left|u_{\infty,L}(x_-)-a_+\right|\geq \delta.
\end{equation}
Abusing notation, we will keep denoting by $u_{\infty,L}$ the possibly translated solution.
The main point is that, by increasing $L$ if needed,  the possibly new  $u_{\infty,L}$ is still in $X_L^-\cap
X_L^+$. Indeed, as in the proof of Theorem \ref{thmThm}, there exist $z_{1,L}\in (x_--M,x_-)$, $z_{2,L}\in (x_-,x_-+M)$
\[
\left|u_{\infty,L}(z_{i,L})-a_-\right|\leq \varepsilon \ \ \textrm{or}\ \
\left|u_{\infty,L}(z_{i,L})-a_+\right|\leq \varepsilon,\ \ i=1,2.
\]
Hence, taking into consideration  property (\ref{eqclaim})-(\ref{eqclaim2}) and (\ref{eqexplan22}), we infer that
\[
\left|u_{\infty,L}(x)-a_-\right|< \delta, \ \ x\leq z_{1,L};\ \
\left|u_{\infty,L}(x)-a_+\right|< \delta,\ \ x\geq z_{2,L},
\]
which clearly implies that $u_{\infty,L} \in X_L^-\cap
X_L^+$ provided that $L\geq |x_-|+M$.

 By (\ref{eqexplan22}) and the analog of
(\ref{eqC1}), it is easy to see that
\begin{equation}\label{eq--}
\left|u_{\infty,L}(x)-a_-\right|\geq \frac{\delta}{2} \ \
\textrm{and}\ \ \left|u_{\infty,L}(x)-a_+\right|\geq
\frac{\delta}{2}\ \ \textrm{for}\ \ x\in
\left[x_-,x_-+\frac{\delta^2}{8C_1} \right],
\end{equation}
where $C_1$ is as in (\ref{eqC1}),
(the point being that this interval is independent of large $L$).
Indeed, if $x\in
 \left[x_-,x_-+\frac{\delta^2}{8C_1} \right]$, letting
 $\rho_\pm(x)=|u_{\infty,L}(x)-a_\pm|$, we have
 \begin{equation}\label{eqremai}
\left|\rho_\pm(x)-\rho_\pm(x_-)\right|\leq
\int_{x_-}^{x}|u_{\infty,L}-a_\pm|_tdt \leq
\int_{x_-}^{x}\left|(u_{\infty,L})_t\right|dt\leq
|x-x_-|^\frac{1}{2} (2C_1)^\frac{1}{2}\leq \frac{\delta}{2}.
 \end{equation}
 Then, using $u_{\infty,L}$ as a
test function, we find that
\begin{equation}\label{eqcontra}
  \begin{array}{lll}
    m_L & \leq & \int_{{\mathbb{R}}}^{}\left\{\frac{1}{2}|(u_{\infty,L})_x|^2+h(x)W(u_{\infty,L})\right\}dx \\
      &   &   \\
      &  = & \int_{\mathbb{R}}^{}\left\{\frac{1}{2}|(u_{\infty,L})_x|^2+h_\infty W(u_{\infty,L})\right\}dx
      +\int_{\mathbb{R}}^{}\left(h(x)-h_\infty\right)W(u_{\infty,L})dx \\
      &   &   \\
 \textrm{via}\ (\ref{eq-}),(\ref{eq--})     & \leq & m_{\infty,L}-c
  \end{array}
\end{equation}
where $c>0$ is independent of large $L$.

 This time we let
\[
L=L_j=jM,
\]
with $j$ a sufficiently large integer that is to be determined so
that (\ref{eqstrict}) holds, which in particular will imply that
$u_L$ is a classical solution to
\begin{equation}\label{eqinhomSb}u_{xx}=h(x)\nabla W(u).\end{equation} Suppose, to the
contrary, that there exists a sequence of $L_j\to \infty$ such that
(\ref{eqstrict}) with $L=L_j$ is violated at some $x_j\leq -L_j$ (the
other case is completely analogous). Then, similarly to (\ref{eqknow}), denoting $u_{L_j}$ by $u_j$, we
would have that
\begin{equation}\label{eqknowInhom}
|u_{j}(x)-a_-|<\delta\ \ \textrm{if}\ \ x\leq -(j+10)M; \ \
|u_{j}(x)-a_+|<\delta\ \ \textrm{if}\ \ x\geq (j+10)M.
\end{equation}
In particular, this implies that $x_j \in \left(-(j+10)M,-jM\right]$. Thus, as in the proof of Theorem \ref{thmThm}, this gives that
\begin{equation}\label{eqmidn}
|u_{j}(x)-a_+|<\delta\ \ \textrm{if}\ \ x\geq -(j-1)M.
\end{equation}
%In view of the analog of (\ref{eqC1}), for $x\in
%\left[-(j+10)M,-(j-10)M \right]$, we have
%\begin{equation}\label{eqknowInhom}
%\left|u_j(x)-u_j\left(-(j+10)M\right)\right|\leq
%\int_{-(j+10)M}^{-(j-10)M}\left|(u_j)_x\right|dx\leq
%(20M)^\frac{1}{2}\left(\int_{\mathbb{R}}^{}\left|(u_j)_x\right|^2dx\right)^\frac{1}{2}\leq
%C_3,
%\end{equation}
%where $C_3$ is independent of $j$ (recall that $M$ depends only on
%$h,W$ and $\delta$). Hence, we obtain that
%\begin{equation}\label{eqaprioriU}
%|u_j(x)|\leq |a_-|+|a_+|+\delta+C_3,\ \ x\in{\mathbb{R}}.
%\end{equation}
From the  above relation (which implies that
$u_j$ solves (\ref{eqinhomSb}) for $x\geq -(j-1)M$), making use of
Arczela-Ascoli's theorem and the standard diagonal argument, passing
to a subsequence if needed, we find that
\begin{equation}\label{eqUloc1}
u_{j}\to {U}\ \ \textrm{in}\ C_{loc}({\mathbb{R}},{\mathbb{R}}^n),
\end{equation}
where ${U}$ satisfies
\begin{equation}\label{eqU}
{U}_{xx}-h(x)\nabla W({U})=0,\ \ |{U}(x)-a_+|\leq \delta, \ \ x\in
{\mathbb{R}}.
\end{equation}
%Furthermore, from the analog of (\ref{eqC1}), we obtain that
%\begin{equation}\label{eqC1inhom}J(U)\leq C_1,\end{equation}
%where $J$ is the energy in (\ref{eqJinho}).
 Moreover, from the
minimality of $u_j$, and the second part of (\ref{eqknowInhom}), it
follows readily that $U$ is a minimizer of the energy subject to its
boundary conditions, that is
\[
J(U)\leq J(U+\varphi)\ \ \ \forall\ \varphi\in W^{1,2}_0(I,{\mathbb{R}}^n) \ \
\textrm{and\ any\ bounded\ interval}\ I\subset \mathbb{R},
\]
(this can be proven as in \cite{danceryanCVPDE}).
Similarly to (\ref{eqLimes}), we find that
\[
\lim_{x\to \pm \infty}U(x)=a_+.
\]
Then, by means of the analog of property (\ref{eqclaim})-(\ref{eqclaim2}), we get that \begin{equation}\label{eqUloc2}U\equiv a_+.\end{equation}
%As in the proof of
%Theorem \ref{thmThm}, given any $d\in (0,\delta)$, there exists
%$\varepsilon\in \left(0,\frac{d}{2}\right)$ such that property
%$(\ref{eqclaim})_+-(\ref{eqclaim2})_+$ holds for $U$. Combining
%(\ref{eq2}), (\ref{eqC1inhom}), and the fact that $h$ is bounded
%from below by some positive constant, we find that $U\equiv a_+$
Actually, in
the case where (\ref{eqAF}) holds, not necessarily with   strict
monotonicity, this can also be deduced by the weak sub-harmonicity of
the function $|U-a_+|$, which follows directly from
(\ref{eqU}). It follows from (\ref{eqUloc1}) and (\ref{eqUloc2}) that
\begin{equation}\label{eqget}
W(u_j)\to 0\ \ \textrm{in}\ C_{loc}(\mathbb{R}).
\end{equation}
 On
the other hand, we have
\[
\begin{array}{rcl}
  m_{L_j} & = & \int_{{\mathbb{R}}}^{}\left\{\frac{1}{2}|(u_j)_x|^2+h(x)W(u_j)\right\}dx \\
    &   &   \\
    &  = & \int_{\mathbb{R}}^{}\left\{\frac{1}{2}|(u_j)_x|^2+h_\infty W(u_j)\right\}dx+
    \int_{\mathbb{R}}^{}\left(h(x)-h_\infty\right) W(u_j)dx  \\
    &   &   \\
u_j \in X_{L_j}^-\cap
X_{L_j}^+\ :&  \geq & m_{\infty,L_j}+
    \int_{\mathbb{R}}^{}\left(h(x)-h_\infty\right) W(u_j)dx  \\
    &   &   \\
\textrm{via}\ (\ref{eqC1}),\ (\ref{eqh}),\  (\ref{eqget}):    & \geq  &
m_{\infty,L_j}+o(1),
\end{array}
\]
where $o(1)\to 0$ as $j\to \infty$, which contradicts
(\ref{eqcontra}). In more detail,  to get the last relation, we estimate as follows:
\[\begin{array}{rcl}
    \int_{\mathbb{R}}^{}\left|\left(h(x)-h_\infty\right) W(u_j)\right|dx & = & \int_{|x|<K}^{}\left|\left(h(x)-h_\infty\right) W(u_j)\right|dx+\int_{|x|>K}^{}\left|\left(h(x)-h_\infty\right) W(u_j)\right|dx \\
      &   &   \\
      & \leq & 4Kh_\infty \max_{|x|\leq K }W(u_j)+C_1 h_0^{-1}\sup_{|x|\geq K}\left|h(x)-h_\infty\right|,
  \end{array}
\]
for any $K>0$. Then, given any $\epsilon>0$,  we choose $K$ so that the second term is smaller than $\epsilon/2$ and subsequently  $j_0$ so that the first term is smaller than $\epsilon/2$ for $j\geq j_0$.

Having established that (\ref{eqstrict}) holds for sufficiently
large $L$, the rest of the proof proceeds verbatim as that of
Theorem \ref{thmThm}.
\end{proof}
\begin{rem}
	\emph{We note that the first condition in (\ref{eqh}) was used only at the very end of the above proof for showing that
	\[\int_{\mathbb{R}}^{}\left(h(x)-h_\infty\right) W(u_j)dx\to 0\ \ \textrm{as}\ \ j\to \infty.\]}
\emph{Interestingly enough, in view of (\ref{eqC3gp}), the above relation can also be deduced from (\ref{eqget}) and Lebesgue's dominated convergence theorem if we  assume instead that
\[h-h_\infty \in L^1(\mathbb{R}).\]}
\end{rem}

\begin{rem}\emph{
Condition (\ref{eqh0G}) was used crucially in obtaining  the uniform estimate (\ref{eqC3gp}) (recall (\ref{eqtriviali})), which allowed us to get (\ref{eqUloc1}). Nevertheless, with some care, we can still obtain a uniform estimate (with respect to $L$) under the weaker condition:
\begin{equation}\label{eqmem}
h(x)\geq h_0,\ \ |x|\geq N \ \ \textrm{for\ some}\ h_0,N>0;\ \ h(x)\geq0,\ \ x\in \mathbb{R},
\end{equation}
instead of (\ref{eqh0G}). Indeed, the proof of (\ref{eqC3gp}) gives us first a uniform estimate for $|u_L|$ on $|x|\geq N$. Then, this can be extended in the remaining region by arguing as in (\ref{eqremai}). Clearly, the validity of (\ref{eqknowInhom}) is not affected by weakening the assumption (\ref{eqh0G}) to (\ref{eqmem}). The same is also true for (\ref{eqmidn}). Indeed, as in Theorem \ref{thmThm} (recall especially (\ref{eq1})), there exist $\xi_{1,j}\in \left(-jM,-(j-1)M\right)$, $\xi_{2,j}\in \left(-(N+M),-N\right)$ such that $\left|u_j(\xi_{i,j})-a_+ \right|\leq \varepsilon$, $i=1,2,$ $j\geq 1$. Now, property (\ref{eqclaim})-(\ref{eqclaim2}) gives us the desired relation in $\left(-(j-1)M,\xi_{1,j}\right)$. Moreover, by relation (\ref{eqremai}) with   fixed end point $\xi_{1,j}$, reducing $\varepsilon>0$ if needed, we deduce that the same holds in $(\xi_{1,j},\xi_{2,j})$. Lastly, in the remaining interval $(\xi_{2,j},\infty)$ we just apply property (\ref{eqclaim})-(\ref{eqclaim2}) between $\xi_{2,j}$ and $(j+11)M$.   We argue similarly for showing (\ref{eqUloc2}) under these conditions. \emph{We therefore conclude that the assertion of Theorem \ref{thminhomconstnt} continues to hold even if (\ref{eqh0G}) is replaced by the weaker condition (\ref{eqmem}).}}

\emph{In the case where $h$ is periodic, we can repeat the above procedure in a periodic fashion, and find that the assertion of Theorem \ref{thmThm} still holds even if (\ref{eqh0G}) is weakened to $h\geq 0$, $h$ nontrivial.}
%if we further impose the following geometric condition on $W$:}
\end{rem}

\begin{rem}\label{rem2}
\emph{Using a different variational argument, Theorem \ref{thminhomconstnt} was proven
in the scalar case in \cite[Thm. 2.2]{bonhoure} (see also \cite{sprandlin} for a result which allows $h_\infty - h(x)$ to change sign for
arbitrarily large values of $|x|$).
}
\end{rem}
\begin{rem}
\emph{It may be plausible that the above theorem generalizes to the case where $h_\infty$ is a periodic function (with the obvious interpretation of  (\ref{eqh})). A related result for the scalar problem can be found in \cite{alves}. It is worth mentioning that the results  in \cite{chenTzeng} do not require the corresponding inhomogeneity to be asymptotically periodic
(however they require further assumptions on the corresponding potential which include the  nondegeneracy of its global minima).
}\end{rem}

%\begin{rem}\label{remAl2}
%It would be interesting to extend Theorem \ref{thminhomconstnt} to
%the case of the semilinear elliptic problem (\ref{eqalikh}).
%\end{rem}

\begin{rem}
\emph{In \cite{korman}, the authors used the method of upper and lower solutions together with an approximation by large finite intervals to show that there is a unique solution to the problem
\[
u'' =h(x)(u^3-u), \ x\in \mathbb{R};\ \ \lim_{x\to \pm \infty}u(x)=\pm 1,
\]
which is an odd and strictly increasing function, under the  assumptions that
$h \in C^1(\mathbb{R})$ is even, $h'<0$ for almost all $x>0$, and $\lim_{x\to +\infty} h(x)>0$. Observe that   this result  is not contained in our Theorem \ref{thminhomconstnt} because the second assumption in (\ref{eqh}) is violated}.
\emph{Nevertheless, an inspection of the proof of   Theorem \ref{thmThm} (see also Remark \ref{rem1}) yields that there exists an odd solution to the scalar problem
\[
u''=h(x)W'(u), \ x\in \mathbb{R};\ \ \lim_{x\to \pm \infty}u(x)=\pm 1,
\]
such that $u>0$ in $(0,\infty)$,
provided that the following assumptions are fulfilled: $h\in C(\mathbb{R})$, $W\in C^1(\mathbb{R})$ are even, $h\geq 0$, $\liminf_{x\to + \infty} h(x)>0$, and  $W(u)>0$ for $u\in [0,\infty)\setminus \{ 1\}$.  The main point is that we can 'pin' the minimizer $u_L$ at the origin by restricting ourselves to the class of odd functions}.   \end{rem}

\subsection{The diverging inhomogeneity}
\begin{thm}\label{thmDiverg}
\emph{Assume that $h\in C(\mathbb{R})$ is nonnegative, and
\begin{equation}\label{eqh++++}\lim_{ x\to \pm \infty}h(x)=\infty.\end{equation} Under solely the
assumption  (\ref{eqass1}) on $W$, there exists a
solution to the problem (\ref{eqinhom}).
}\end{thm}
\begin{proof}
Our strategy remains the same. We consider the constraint
minimization problem (\ref{eqstrict})-(\ref{eqJinho}) and show that any
minimizer $u_L$ (which exists by standard arguments) satisfies
(\ref{eqstrict}), provided that $L$ is sufficiently large. Clearly,
estimate (\ref{eqC1}) holds (abusing notation).

We claim that, for large $L$, we have that
\[
|u_L(x)-a_+|<\delta,\ \  x\geq L.\] Indeed, suppose to the contrary
that there exists $x_+\geq L$ such that $|u_L(x_+)-a_+|=\delta$ (we have suppressed the obvious dependence of $x_+$ on $L$, $x_+$ is not related to that in the proof of Theorem \ref{thminhomconstnt}).
Then, arguing as we did before for showing (\ref{eq--}), we find that
\[
\delta \geq \left|u_{L}(x)-a_+\right|\geq \frac{\delta}{2} \ \ \textrm{for}\ \
x\in \left[x_+,x_++\frac{\delta^2}{8C_1} \right]. \] In turn, this
implies that
\[
W\left(u_{L}(x) \right)\geq c>0, \ \ x\in
\left[x_+,x_++\frac{\delta^2}{8C_1} \right],
\]
where \emph{the constant $c>0$ is independent of large $L$}. On the
other hand, if $L$ is sufficiently large, the above relation
contradicts the fact that
\[
\int_{L}^{\infty}W\left(u_{L}(t) \right)dt\to 0 \ \ \textrm{as}\ \
L\to \infty,
\]
which follows directly from (\ref{eqC1}) and (\ref{eqh++++}).
Analogously, we can show that \[ |u_L(x)-a_-|<\delta,\ \  x\leq
-L.\]

Having established that $u_L$ satisfies (\ref{eqstrict}) (and as a
consequence (\ref{eqinhomSb})), for sufficiently large $L$,  we can
proceed in a similar manner to show that it also satisfies the
desired asymptotic behavior at respective infinities.
\end{proof}

\begin{rem}\label{rem3}
\emph{If $h(x)>0,\ x\in \mathbb{R}$, the above theorem is contained in
\cite{izJde2007}.
}\end{rem}

%\begin{rem}
%Again, it would be interesting to extend Theorem \ref{thmDiverg} to
%the case of the semilinear elliptic problem (\ref{eqalikh}).
%\end{rem}

\begin{rem}\label{rem4}
\emph{In \cite{sourdisConley}, relying on the oddness of the nonlinearity,
we used a shooting argument to show that there exists a unique odd
solution to the problem
\[
u_{xx}=|x|^\alpha (u^3-u),\ \ \lim_{x\to \pm \infty}u(x)=\pm 1,
\]
where $\alpha>0$. Moreover, this solution is increasing and
asymptotically stable. This heteroclinic connection  describes the
profile of the transition layer, near $x=0$, of the singular
perturbation problem (\ref{eqNaka}) with $h \sim |x|^\alpha$ as
$x\to 0$ and $h>0$ elsewhere (here $W(u)=\frac{(u^2-1)^2}{4}$).
}\end{rem}
\begin{rem}
\emph{The results in this paper generalize straightforwardly to the more general class of systems $u''=\nabla_u W(x,u)$.}
\end{rem}
\section*{Acknowledgments} This project has received funding from the European Union's Seventh Framework programme for  research and innovation under the Marie Sk\l{}odowska-Curie grant agreement No 609402-2020 researchers: Train to Move (T2M).


\begin{thebibliography}{50}
\bibitem{alamaGui}
{\sc S. Alama}, {\sc L. Bronsard}, and {\sc C. Gui},
\emph{Stationary layered solutions in $\mathbb{R}^2$ for an
Allen--Cahn system with multiple well potential}, Calc. Var.
\textbf{5} (1997), 359--390.


\bibitem{alamaPeli}
{\sc S. Alama}, {\sc L. Bronsard}, {\sc A. Contreras}, and  {\sc D.
Pelinovsky}, \emph{Domain walls in the coupled Gross--Pitaevskii
equations}, {Archive for Rational Mechanics and Analysis}
  \textbf{215} (2015), 579--610.


\bibitem{alberti}
{\sc G. Alberti}, \emph{Variational methods for phase transitions,
an approach via $\Gamma$-convergence}, In L. Ambrosio and N. Dancer,
Calculus of variations and partial differential equations, edited by
G. Buttazzo, A. Marino, and M. K. V. Murthy. Springer, Berlin, 2000,
95–114

\bibitem{alessioPeriod}
{\sc F.G. Alessio}, and {\sc P. Montecchiari}, \emph{Layered
solutions with multiple asymptotes for non autonomous Allen-Cahn
equations in $\mathbb{R}^3$}, Calc. Var. \textbf{46} (2013),
591–-622.







\bibitem{alesio} {\sc F.G. Alessio}, and {\sc P. Montecchiari},
\emph{Multiplicity of layered solutions for Allen-Cahn systems with
symmetric double well potential}, J. Differential Equations \textbf{257} (2014), 4572?-4599.

\bibitem{alikakosIndiana}
{\sc N.D. Alikakos}, and {\sc G. Fusco}, \emph{On the connection
problem for potentials with several global minima},  Indiana Univ.
Math. J. \textbf{57} (2008), 1871-–1906.


\bibitem{alikKATZ}
{\sc N.D. Alikakos}, and {\sc N.I. Katzourakis},  \emph{Heteroclinic travelling waves of
gradient diffusion systems}, Transactions of the American
Mathematical Society \textbf{363} (2011),   1365--1397.

%\bibitem{alikakosBullGreek}
%{\sc N.D. Alikakos}, and {\sc G. Fusco}, \emph{On an elliptic system
%with symmetric potential possessing two global minima}, arXiv
%preprint arXiv:0810.5009 (2010).

%\bibitem{alikakosARMA}
%{\sc N.D. Alikakos}, and {\sc G. Fusco}, \emph{Entire solutions to
%equivariant elliptic systems with variational structure},
%  Arch. Ration. Mech. Anal. \textbf{202} (2011),
% 567-–597.

\bibitem{alikakosPreprint}
{\sc N.D. Alikakos}, and {\sc G. Fusco}, \emph{A maximum principle
for systems with variational structure and an application to
standing waves},  J. Eur. Math. Soc.
		(JEMS) \textbf{17} (2015), 1547--1567.


\bibitem{alves}
{\sc C.O. Alves}, \emph{Existence of heteroclinic solution for a class of non-autonomous second-order equations},  Nonlinear Differential Equations and Applications NoDEA \textbf{22} (2015), 1195--1212.




\bibitem{ambrosioCabre}
 {\sc L. Ambrosio}, and  {\sc X. Cabr\'{e}}, \emph{Entire solutions of semilinear elliptic equations in $\mathbb{R}^3$ and a conjecture of De Giorgi},
  J. Amer. Math. Soc. \textbf{13} (2000),
 725–-739.

\bibitem{amster}
{\sc P. Amster},
Topological Methods in the
Study of Boundary Value
Problems, Springer, 2014.


\bibitem{smyrnelia}
{\sc P. Antonopoulos}, and {\sc P. Smyrnelis}, \emph{On minimizers of the Hamiltonian system $u''=\nabla W(u)$ and on the existence of heteroclinic, homoclinic and periodic orbits}, to appear in  Indiana Univ. Math. J (2016).



\bibitem{ball}
{\sc J.M. Ball}, and {\sc E.C.M. Crooks}, \emph{Local minimizers and
planar interfaces in a phase--transition model with interfacial
energy}, Calc. Var. \textbf{40} (2011), 501-–538.


\bibitem{baldo}
{\sc S. Baldo}, \emph{Minimal interface criterion for phase
transitions in mixtures of Cahn-Hilliard fluids}. Ann. Inst. H.
Poicar\'{e} Anal. Non Linéare \textbf{7} (1990), 67--90.



\bibitem{barroscoFosec}
 {\sc A.C. Barroso}, and {\sc I. Fonseca}, \emph{Anisotropic singular perturbations--the vectorial
case}, Proc. Roy. Soc. Edinburgh Sect. A \textbf{124} (1994),
527--571.

\bibitem{bartsch}
{\sc T. Bartsch}, {\sc Z.- Q. Wang}, and {\sc M. Willem}, \emph{The
Dirichlet problem for superlinear elliptic equations}, Handbook of
differential equations: Stationary partial differential equations
\textbf{II}, Elsevier, 2005, 1--71.


\bibitem{bethuel}
{\sc F. Bethuel}, {\sc G. Orlandi}, and {\sc D. Smets}, \emph{Slow
motion for gradient systems with equal depth multiple-well
potentials}, J. Differential Equations \textbf{250} (2011), 53-–94.




\bibitem{bonhoure}
{\sc D. Bonheure}, and {\sc L. Sanchez}, \emph{Heteroclinic orbits
for some classes of second and fourth order differential equations},
 Handbook of
Differential Equations \textbf{III}, Elsevier, Amsterdam (2006),
103--202.

\bibitem{braides}
{\sc A. Braides}, $\Gamma$-convergence for beginners, Oxford Lecture Series in Mathematics and its Applications
\textbf{22}, Oxford: Oxford University Press, 2002.

\bibitem{braun}
{\sc R.J. Braun}, {\sc J.W. Cahn}, {\sc G.B. McFadden}, and {\sc
A.A. Wheeler}, \emph{Anisotropy of interfaces in an ordered alloy: a
multiple-order parameter model}, Philosophical Transactions of the
Royal Society of London, Series A \textbf{355} (1997), 1787-–1833.


\bibitem{BR}
{\sc L. Bronsard},  and  {\sc F. Reitich},  \emph{On three--phase
boundary motion and the singular limit of a vector--valued
Ginzburg-Landau equation},   Arch. Rational Mech. Anal. \textbf{124}
(1993),  355-–379.

\bibitem{bgs} {\sc L. Bronsard}, {\sc C. Gui}, and {\sc M. Schatzman},
\emph{A three-layered minimizer in $\mathbb{R}^2$ for a variational
problem with a symmetric three-well potential}, Comm. Pure Appl.
Math. \textbf{49} (1996), 677-–715.


\bibitem{cafa-cordoba}
{\sc L.A. Caffarelli}, and {\sc A. C\'{o}rdoba}, \emph{Uniform
convergence of a singular perturbation problem}, Comm. Pure Appl.
Math. \textbf{48} (1995),  1-–12.

\bibitem{cardioli}
{\sc P. Caldiroli},  \emph{A new proof of the existence of homoclinic orbits for a class of autonomous second order Hamiltonian systems in $\mathbb{R}^N$},  Math. Nachr.  \textbf{187}  (1997), 19--27.


\bibitem{chenTzeng}
{\sc C.-N. Chen}, and {\sc S.-Y. Tzeng}, \emph{Existence and
multiplicity results for heteroclinic orbits of second order
Hamiltonian systems}, J. Differential Equations \textbf{158} (1999),
 211-–250.

\bibitem{chenFH} {\sc C.-N. Chen}, and {\sc Y. S. Choi},
\emph{Standing pulse solutions to FitzHugh-Nagumo equations},
Archive for Rational Mechanics and Analysis
\textbf{206} (2012), 741--777.

\bibitem{italians}
{\sc E.N.M. Cirillo}, {\sc N. Ianiro}, and {\sc G. Sciarra},
\emph{Phase coexistence in consolidating porous media}, Physical
Review E \textbf{81} (2010), 061121.

\bibitem{danceryanCVPDE}
{\sc E. N. Dancer}, and {\sc S. Yan}, \emph{Construction of various
types of solutions for an elliptic problem}, Calc. Var.  \textbf{20}
(2004), 93-–118.

\bibitem{valdin}
{\sc S. Di Pierro}, {\sc S. Patrizi}, and {\sc E. Valdinoci},
\emph{Chaotic orbits for systems of nonlocal equations}, arXiv preprint
arXiv:1511.06799v2 (2016).


\bibitem{fonsecaTartarEdin}
{\sc I. Fonseca}, and {\sc L. Tartar}, \emph{The gradient theory of
phase transitions for systems with two potential wells}, Proc. Roy.
Soc. Edinburgh Sect. A \textbf{111} (1989), 89--102.



\bibitem{FH} {\sc S. Fournais}, and {\sc B. Helffer}, Spectral methods in surface superconductivity, Springer, 2010.

%\bibitem{fuscoG}
%{\sc G. Fusco}, \emph{Equivariant entire solutions to the elliptic
%system $\Delta u=W_u(u)$ for general $G$-invariant potentials},
%Calc. Var. DOI 10.1007/s00526-013-0607-7 (2013).


%\bibitem{fuscoCPAA}
%{\sc G. Fusco}, \emph{On some elementary properties of vector
%minimizers of the Allen-Cahn energy}, to appear in Comm. Pure Appl.
%Anal. (2013).


\bibitem{goldman}
{\sc M. Goldman}, and   {\sc B. Merlet},
\emph{Phase segregation for binary mixtures of Bose-Einstein Condensates},
 arXiv
preprint arXiv:1505.07234v2 (2015).


%\bibitem{guiSch}
%{\sc C. Gui}, and {\sc M. Schatzman}, \emph{Symmetric quadruple
%phase transitions}, Indiana Univ. Math. J. \textbf{57} (2008),
%781–-836.

\bibitem{howard}
{\sc P. Howard}, and {\sc B. Kwon}, \emph{Spectral analysis for
transition front solutions in Cahn-Hilliard systems}, Discrete and
Continuous Dynamical Systems A \textbf{32} (2012), 126--166.

\bibitem{izJde2007}
{\sc M. Izydorek}, and {\sc J. Janczewska}, \emph{Heteroclinic
solutions for a class of the second order Hamiltonian systems}, J.
Differential Equations, \textbf{238} (2007), 381-–393.


\bibitem{kar}
{\sc N. Karantzas}, \emph{On the connection problem for the
p-Laplacian system for potentials with several global minima}, arXiv
preprint arXiv:1311.1135 (2013).

\bibitem{katzourakis}
{\sc N. Katzourakis}, \emph{On the loss of compactness in the
heteroclinic connection problem},  Proc. Roy.
Soc. Edinburgh Sect. A \textbf{146} (2016) 595--608.


\bibitem{korman}
{\sc P. Korman},
{\sc A. C. Lazer}, and
{\sc Y. Li},
\emph{On homoclinic and heteroclinic orbits for Hamiltonian systems},
Differential and Integral Equations \textbf{10} (1997), 357-–368.

\bibitem{leoniBook}
{\sc G. Leoni},
A first course in Sobolev spaces,
 Graduate Studies in Mathematics
\textbf{105}, AMS, 2009.


\bibitem{leonESaiM}
{\sc G. Leoni}, \emph{A remark on the compactness for the Cahn-Hilliard functional}, ESAIM: Control, Optimization and Calculus of Variations \textbf{20} (2014),
517--523.

\bibitem{monteil} {\sc A. Monteil}, and {\sc F. Santambrogio}, \emph{Metric methods for heteroclinic connections},
Mathematical Methods in the Applied Sciences (2016),
 DOI: 10.1002/mma.4072.

\bibitem{modica}
{\sc L. Modica}, \emph{The gradient theory of phase transition and
the minimal interface criterion}, Arch. Rational Mech. Anal.
\textbf{98} (1987), 123--142.

\bibitem{nakashima}
{\sc K. Nakashima}, \emph{Multi-layered stationary solutions for a
spatially inhomogeneous Allen-Cahn equation}, J. Differential
Equations \textbf{191} (2003), 234–-276.




\bibitem{rabinowitz}
{\sc P.H. Rabinowitz}, \emph{Periodic and heteroclinic orbits for a
periodic Hamiltonian system}, Annales de l'institut Henri Poincaré
A-N \textbf{6} (1989), 331--346.

%\bibitem{saez}
% {\sc M. Saez Trumper}, \emph{Existence of a solution to a vector-valued Allen-Cahn equation with a
%three well potential}, Indiana Univ. Math. J. \textbf{58} (2009),
%213-–268.


\bibitem{schatzman}
{\sc M. Schatzman}, \emph{Asymmetric heteroclinic double layers},
ESAIM Control Optim. Calc. Var. \textbf{8} (2002), 965-–1005.

\bibitem{sourdisAML}
{\sc C. Sourdis}, \emph{On the existence of dark solitons of the defocusing cubic nonlinear Schr\"{o}dinger equation with periodic inhomogeneous nonlinearity}, Appl. Math. Letters \textbf{46} (2015), 123--126. (2015).

\bibitem{sourdisConley}
{\sc C. Sourdis}, \emph{On some second order singularly perturbed
boundary value problems with non-degenerate inner solutions},
preprint available at http://www.tem.uoc.gr/~csourdis/3singular.pdf

\bibitem{sprandlin}
 {\sc G. S. Spradlin}, \emph{Heteroclinic solutions to an asymptotically autonomous second-order equation}, Electronic Journal of Differential Equations \textbf{137} (2010), 1--14.


\bibitem{stefanos}
{\sc V. Stefanopoulos}, \emph{Heteroclinic connections for
multiple--well potentials: the anisotropic case}, Proceedings of the
Royal Society of Edinburgh \textbf{138A} (2008), 1313--1330.

\bibitem{sternberg}
{\sc P. Sternberg}, \emph{Vector-valued local minimizers of
nonconvex variational problems}, Rocky Mountain J. Math. \textbf{21}
(1991), 799--807.

\bibitem{weinstein}
{\sc A. Weinstein},
\emph{Periodic orbits for convex Hamiltonian systems},
Annals of Mathematics
\textbf{108} (1978), 507--518.

\bibitem{zunuga}
 {\sc A. Z\'{u}niga}, and {\sc P. Sternberg}, 
 \emph{On the heteroclinic connection problem for multi-well   gradient systems}, J. Differential Equations doi:10.1016/j.jde.2016.06.010 (2016).

\end{thebibliography}
\end{document}